\newtheorem*{teor}{Theorem}
\newtheorem*{lemanom}{Lemma}
\theoremstyle{definition}
\newtheorem*{gracies}{Acknowledgements}
\newcommand{\ep}{\varepsilon}
\newcommand{\C}{\mathbb{C}}
\begin{document}

\title{{\bf The maximal Beurling transform associated with squares}}
\author{  Anna Bosch-Cam\'{o}s, Joan Mateu and  Joan Orobitg}

\date{}

\maketitle

\begin{abstract}

It is known that the improved Cotlar's inequality $B^{*}f(z) \le C M(Bf)(z)$, $z\in\C$,
holds for the Beurling transform $B$, the maximal Beurling transform
$B^{*}f(z)=$ $\displaystyle\sup_{\ep >0}\left|\int_{|w|>\ep}f(z-w) \frac{1}{w^2} \,dw\right|$, $z\in\C$,
and the Hardy--Littlewood maximal operator $M$. In this note we consider the maximal Beurling transform associated with
squares, namely, $B^{*}_Sf(z)=\displaystyle\sup_{\ep >0}\left|\int_{w\notin Q(0,\ep)}f(z-w) \frac{1}{w^2} \,dw \right|$, $z\in\C$,
$Q(0,\ep)$ being the square with sides parallel to the coordinate axis of side length $\ep$.
We prove that $B_{S}^{*}f(z) \le C M^2(Bf)(z)$, $z\in\C$, where $M^2=M \circ M$ is the iteration of the Hardy--Littlewood maximal operator,
and $M^2$ cannot be replaced by $M$.

\end{abstract}

%\subjclass[2010]{Primary 42B20, 42B25}

\section{Introduction}

Although we only work with the Beurling transform and its iterations,
we begin by considering the question for more general Calder\'on-Zygmund  singular operators.
Let $T$ be a smooth homogeneous Calder\'on-Zygmund singular integral operator on~$\mathbb R^n$ with kernel 
\begin{equation} \label{nucli}
K(x)=\frac{\Omega(x)}{|x|^n}, \quad  x\in \mathbb R^n\setminus \{0\},
\end{equation}
where $\Omega$ is a homogeneous function of degree 0 whose restriction 
to the unit sphere $S^{n-1}$ is $C^{\infty}$ and satisfies the cancellation property 
$$
\int_{|x|=1}\Omega(x)\,d\sigma(x)=0, 
$$
 $\sigma$ being the normalized surface measure in $S^{n-1}$. Let $Tf$ be the principal value convolution operator 
\begin{equation} \label{def}
Tf(x)= \text{p.v.} \int f(x-y)K(y)\,dy\equiv\lim_{\varepsilon \rightarrow 0}T^{\ep}f(x), 
\end{equation}
where $T^{\ep}f$ is the truncated operator at level ${\ep}$ defined by 
\begin{equation} \label{truncated}
T^{\ep}f(x)=\int_{|y|>\ep}f(x-y)K(y)\,dy =\int_{|x-y|>\ep}f(y)K(x-y)\,dy. 
\end{equation}
For $f \in L^p, \, \, 1\le p < \infty$, the limit in \eqref{def} exists for almost all $x$. 
One says that the operator $T$ is even (or odd) if the kernel \eqref{nucli} is even (or odd), that is, if
$\Omega(-x)= \Omega(x)$, $x\in \mathbb R^n\setminus \{0\}$ (or $\Omega(-x)= -\Omega(x)$, $x\in \mathbb R^n\setminus \{0\}$).
Let $T^*$ be the maximal singular integral 
$$
T^*f(x)=\sup_{\ep >0}|T^{\ep}f(x)|, \quad  x\in \mathbb R^n.
$$
The classical Cotlar's inequality (e.g.\ \cite[p.~295]{Gr1}) asserts that
$$
T^*f(x)\le C(M(Tf)(x) + M(f)(x)),\quad x\in \mathbb R^n,
$$
where $M$ denotes the Hardy--Littlewood maximal operator. In  \cite{MV} one proved the pointwise inequality
\begin{equation}\label{B1}
 B^*f(z)\le M(Bf)(z), \quad z\in\mathbb{C},
\end{equation}
where $Bf$ is the Beurling transform of $f$ defined by
\begin{equation}\label{beurling}
 Bf(z)=  \text{p.v.} \frac{-1}{\pi}\int \frac{f(z-w)}{w^2} \, dw, \quad z\in\mathbb{C},
\end{equation}
$dw$ denoting the 2-dimensional Lebesgue measure.
The work \cite{MOV} addresses the issue of characterizing even smooth homogeneous Calder\'on--Zygmund operators 
for which the inequality \eqref{B1} is true. In particular, it shows that 
$T^*f(x)\le C M(Tf)(x)$, $x\in\mathbb R^n$, if and only if $\| T^*f\|_2 \le C \|Tf\|_2$, $f\in L^2(\mathbb R^n)$.
The analogous question for odd operators is treated in \cite{MOPV}, showing that
$T^*f(x)\le C M^2(Tf)(x)$, $x\in\mathbb R^n$, if and only if $\| T^*f\|_2 \le C \|Tf\|_2$, $f\in L^2(\mathbb R^n)$.
(See \cite{BMO} for an $L^p$ version of these results.)

In the definition \eqref{truncated} of the truncated operator $T^{\ep}f$  we are taking the kernel $K$ outside a ball of radius $\ep$.
We also could define a truncation outside a cube of sidelength $\ep$. Let $Q(x,\ep)$ be the cube, with sides parallel to axis, centered at
$x$ of side length $\ep$ and define
\begin{equation*} 
T^{\ep}_Qf(x)=\int_{y\notin Q(0,\ep)}f(x-y)K(y)\,dy =\int_{y\notin Q(x,\ep)}f(y)K(x-y)\,dy,
\end{equation*}
and consider the associated maximal singular integral 
$$
T^*_Sf(x)=\sup_{\ep >0}|T_Q^{\ep}f(x)|, \quad x\in \mathbb R^n.
$$
By simple geometry one checks that
\begin{align*}
T^{\ep}_Qf(x) &= T^{\sqrt{n}\ep/2}f(x)+ \int_{B(0,\sqrt{n}\ep/2)\setminus Q(0,\ep)} f(x-y)K(y)\,dy,\\
T^{\ep}f(x) &= T^{2\ep}_Qf(x)+ \int_{Q(0,2\ep)\setminus B(0,\ep)} f(x-y)K(y)\,dy,
\end{align*}
and therefore
\begin{align*}
 T^*_Sf(x) &\le T^*f(x) + C M(f)(x),\\
 T^*f(x) &\le T^*_Sf(x) + C M(f)(x).
\end{align*}
Consequently, from the $L^p$-theory point of view, the maximal operators $T^*$ and $T^*_S$ are equivalent. On the other hand,
we wonder if there is any pointwise inequality that relates $T^*_S f(x)$ with $Tf(x)$ like in \eqref{B1}. In this note we provide an answer
for the Beurling transform defined in \eqref{beurling} and for its $k$th iteration $B^k= B\circ \cdots \circ B$. The Fourier multiplier of $B$
is $\dfrac{\bar\xi}{\xi}$, or in other words, $\widehat{Bf}(\xi)=\dfrac{\bar\xi}{\xi} \hat f(\xi)$ and then $B$ is an isometry on $L^2(\C)$.
The kernel $b_k$ of $B^k$ may be computed explicitly,
for instance via a Fourier transform argument \cite[p.~73]{St}, and one obtains
$$
b_k(z)=\dfrac{(-1)^k k}{\pi } \dfrac{\bar z^{k-1}}{z^{k+1}}, \quad z\neq 0.
$$ 
Similarly we get that the kernel of the inverse operator, $(B^k)^{-1}$, is precisely the conjugate kernel:
$\dfrac{(-1)^k k}{\pi } \dfrac{ z^{k-1}}{\bar z^{k+1}}$.

\begin{teor}
\begin{enumerate}[(a)]
 \item If $k$ is odd, then
 $$
 (B^k)^*_S f(z)\le C M^2(B^kf)(z),\quad z\in\mathbb C,
  $$
  where $C$ is a constant depending on $k$ and $M^2=M \circ M$ the iterated maximal operator.
  \item If $k$ is even, for any positive constant $C$, for any $z\in\C$ and for any positive integer $j$ there exists a 
  function $f$ in $L^2(\mathbb{C})$ such that
  $$
 (B^k)^*_S f(z)\ge C M^j(B^kf)(z),
  $$
  where $M^j =M\circ \cdots \circ M$ is the $j$th iteration of $M$.
  \end{enumerate}
\end{teor}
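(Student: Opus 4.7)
My strategy is to compare the square-truncated operator with a ball-truncated one and handle the discrepancy using the rotational symmetries of $b_k$. Since the corners of $Q(0,\ep)$ sit on the circle of radius $\sqrt{2}\ep/2$, writing $R^\ep=B(0,\sqrt{2}\ep/2)\setminus Q(0,\ep)$ for the four circular segments between the circle and the square, I use the elementary identity
$$(B^k)^\ep_Q f(z) = (B^k)^{\sqrt{2}\ep/2}f(z) + D^\ep f(z),\qquad D^\ep f(z):=\int_{R^\ep}b_k(w)\,f(z-w)\,dw.$$
Taking the supremum in $\ep$ gives $(B^k)^*_Sf(z)\le(B^k)^*f(z)+\sup_{\ep>0}|D^\ep f(z)|$. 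Because $b_k(-w)=b_k(w)$, each $B^k$ is an even Calder\'{o}n--Zygmund operator, and the Cotlar-type inequality of \cite{MV,MOV} controls the first piece by $C\,M(B^kf)(z)\le C\,M^2(B^kf)(z)$. The whole question then reduces to estimating $\sup_\ep|D^\ep f(z)|$.

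For part (a), a direct computation gives $b_k(iw)=(-1)^k b_k(w)$, so when $k$ is odd $b_k$ is anti-invariant under rotation by $\pi/2$. Since $R^\ep$ is $\pi/2$-invariant (hence centrally symmetric), the zeroth and first moments of $b_k\mathbf{1}_{R^\ep}$ all vanish:
$$\int_{R^\ep}b_k(w)\,dw = \int_{R^\ep}b_k(w)\,w\,dw = \int_{R^\ep}b_k(w)\,\bar w\,dw = 0.$$
Setting $g=B^kf$ and $f=(B^k)^{-1}g$, I would write $D^\ep\circ(B^k)^{-1}$ as convolution with the kernel $K_\ep(v)=\int_{R^\ep}b_k(w)\,\overline{b_k}(v-w)\,dw$; the vanishing moments combined with a second-order Taylor expansion of $\overline{b_k}(v-w)$ around $v$ yield the good off-diagonal bound $|K_\ep(v)|\le C\ep^2|v|^{-4}$ for $|v|\gtrsim\ep$, while an $L^2$-based estimate handles the near-diagonal region $|v|\lesssim\ep$. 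This is enough to derive the pointwise bound $|D^\ep f(z)|\le C\,M^2(B^kf)(z)$ uniformly in $\ep$, completing (a). The main technical point is the treatment of the transition regime $|v|\sim\ep$, which is where the second iteration of $M$ becomes essential.

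For part (b) with $k$ even, $b_k(iw)=b_k(w)$, the rotational cancellation disappears, and a short polar-coordinate computation using $b_k=c_k r^{-2}e^{-2ki\theta}$ shows that $\alpha_k:=\int_{R^\ep}b_k(w)\,dw$ is a nonzero constant independent of $\ep$ (the $\ep$-independence is by homogeneity of $b_k$; nonvanishing is verified, e.g., by computing $\alpha_2=(2-\pi/2)c_2\neq 0$). Hence $D^\ep f(z)\to\alpha_k f(z)$ as $\ep\to 0$ for $f$ continuous at $z$. To produce the counter-example, by translation I fix $z=0$ and take $g(w)=\phi(|w|)\,e^{-2ki\theta_w}\in L^2(\C)$ with $\phi\in C^\infty_c$ supported in a thin annulus $[a,b]\subset(0,\infty)$; a polar-coordinate computation gives
$$(B^k)^{-1}g(0)=2\pi c_k\int_0^\infty\phi(r)/r\,dr,$$
which can be made arbitrarily large by enlarging $\ln(b/a)$, while $M^jg(0)\le\|g\|_\infty=\|\phi\|_\infty$ stays bounded. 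Setting $f=(B^k)^{-1}g$, we have $B^kf(0)=g(0)=0$, so $(B^k)^*_Sf(0)\ge|\alpha_k f(0)|-o(1)$ beats $M^j(B^kf)(0)$ by an arbitrary factor. The main obstacle is justifying the limit $D^\ep f(0)\to\alpha_k f(0)$ rigorously for this $f$ and checking that the remaining pieces of $(B^k)^\ep_Qf(0)$ do not spoil the lower bound.
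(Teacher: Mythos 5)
Your overall strategy for part~(a) — compare the square truncation with the ball truncation and estimate the difference $D^\ep f(z)=\int_{R^\ep}b_k(w)f(z-w)\,dw$ after transferring it onto $g=B^kf$ via the conjugate kernel $K_\ep=(B^k)^{-1}(b_k\chi_{R^\ep})$ — is sound, and it is essentially a rescaled version of what the paper does (the paper normalizes $\ep=2$ and works with $a_k=(B^k)^{-1}(b_k\chi_{\mathbb C\setminus Q_0})$, which differs from your $K_\ep$ only by the choice of ``outer'' region). The moment computations for odd $k$ are correct, and the off-diagonal bound $|K_\ep(v)|\lesssim\ep^2|v|^{-4}$ is fine.

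The genuine gap in part~(a) is the near-diagonal regime. You write that ``an $L^2$-based estimate handles the near-diagonal region $|v|\lesssim\ep$,'' but that is the wrong tool: Cauchy--Schwarz would leave you with a quantity of the form $\bigl(\frac{1}{\ep^2}\int_{B(z,\ep)}|g|^2\bigr)^{1/2}$, which is not pointwise dominated by $M^2g(z)$ (it is a power $>1$ average). What actually produces the second iteration of $M$ is the fact that $b_k\chi_{R^\ep}$ is a bounded function, so $K_\ep$ lies in $\mathrm{BMO}$ with $\|K_\ep\|_{\mathrm{BMO}}\lesssim \ep^{-2}$; one then applies the generalized H\"older inequality (duality of $\exp L$ and $L\log L$) together with P\'erez's identification $M_{L\log L}\simeq M^2$. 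This is exactly the step in the paper that yields the $M^2$ on the right-hand side, and it cannot be replaced by an $L^2$ estimate. You correctly sensed that the near-diagonal term is where $M^2$ enters, but the mechanism you propose does not deliver it.

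For part~(b), the construction (radial-times-oscillatory $g$ concentrated in a thick annulus, $f=(B^k)^{-1}g$, $D^\ep f(0)\to\alpha_k f(0)$ while $M^jg(0)$ stays bounded) is a workable variant of the paper's argument. However, you treat the nonvanishing of $\alpha_k=\int_{R^\ep}b_k$ (equivalently, $B^k(\chi_{Q_0})(0)\neq 0$) as obvious, verifying it only for $k=2$. This is precisely the delicate point: after reducing to a one-dimensional integral one is left with an alternating binomial sum whose nonvanishing is not at all apparent; the paper devotes its Lemma, the identity \eqref{suma}, and the entire Appendix to showing that the sum equals $-1$ and hence the integral equals a nonzero rational number minus $\pi/4$. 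A single numerical check at $k=2$ does not establish the claim for all even $k$, so as written your part~(b) has a hole exactly where the paper has to work hardest. Finally, for part~(a) you also invoke $(B^k)^*f\le CM(B^kf)$ for all odd $k$, which goes beyond \eqref{B1} ($k=1$); this is available via \cite{MOV} but should be flagged as a nontrivial input rather than folded silently into the Cotlar step.
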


\medskip

Notice that in the right hand side of \eqref{B1} there is the maximal operator, but in~(a) of the Theorem the iteration
of $M$ appears. The reason is that $B^{-1}(b_1\chi_{\C\setminus D(0,1)})$ is a bounded function with compact support, but 
$B^{-1}(b_1\chi_{\C\setminus Q(0,1)})$ is an unbounded BMO function without compact support.
We also provide an example to show that in part (a) of the above theorem for $k=1$ we cannot replace
the iterated Hardy--Littlewood maximal operator $M^2=M\circ M$ by $M$. We believe that similar examples should also give optimality 
for the case $k = 3,5,\dots$ . Property (b) of the Theorem is also satisfied for the Cauchy transform on a Lipschitz graph
when the graph is not a line (see \cite{Gi}).

We adhere to the usual convention of denoting by $C$ a positive constant, independent
of the relevant parameters involved, and which may vary from an occurrence to another.
Our notation and terminology are standard. For example, $A\approx B$ means that
the two quantities $A$ and $B$ satisfy the relation $C^{-1} A \le B \le CA$, for some constant
$C\ge 1$.

\section{Proof of the Theorem}
We proceed as in \cite{MOPV}. By translating and dilating one reduces the proof to
\begin{equation}\label{desigualtat0}
 |(B^k)^2_Q f(0)|\le C M^2(B^kf)(0),
\end{equation}
where $(B^k)^2_Q f(0)$ is the truncated integral at level 2. Denote the square $Q(0,2)=[-1,1]\times [-1,1]$ by $Q_0$.
As usual, $\chi_E$ denotes the characteristic function of the set~$E$.
The idea is to obtain an identity of the form
\begin{equation*}
 b_k(z)\chi_{\C\setminus Q_0}(z) = B^k(a_k)(z),
 \end{equation*}
for some function $a_k$. Since $B^k$ is an invertible operator, we have
\begin{equation*}
a_k = (B^k)^{-1} (b_k\, \chi_{\C\setminus Q_0})
\end{equation*}
and so $a_k\in \operatorname{BMO}(\C)$. We claim that if $k$ is odd we have the decay estimate
\begin{equation}\label{decay}
 |a_k(z)|\le \frac{C_k}{|z|^3}, \quad \text{if } |z|> 3.
\end{equation}
Before proving the claim we show how it yields (a) of the Theorem.
We argue as in \cite[p.~3675]{MOPV}. For any $f\in L^2(\C)$ we have
\begin{equation}\label{E1}
\begin{split}
(B^k)^2_Q f(0) =  & \int_{z\notin Q_0} f(z) b_k(-z)\, dz  = \int f(z)b_k(z)\chi_{\C\setminus Q_0}(z)\, dz\\
               = & \int f(z) B^k(a_k)(z) \, dz = \int B^k f(z) a_k(z)\, dz\\
               = & \int_{|z|<3} B^kf(z)(a_k(z)- (a_k)_{D(0,3)})\, dz\\
 & +  (a_k)_{D(0,3)} \int_{|z|<3} B^kf(z)\, dz +  \int_{|z|>3} B^kf(z) a_k(z)\, dz\\
 := & \; I + II + III,
\end{split}
\end{equation}
where, as usual,  $(a_k)_{D(0,3)}= |D(0,3)|^{-1}\int_{D(0,3)} a_k(z) \,dz$. 
To estimate the term $I$ we use the generalized H\"older's inequality and the pointwise equivalence
$M_{L(\log L)}f(x)\simeq M^2f(x)$ (\cite{P}) to get
$$
|I| \le C\| a_k \|_{\operatorname{BMO}} \| B^kf\|_{L(\log L), D(0,3)}\le C M^2(B^kf)(0).
$$
Clearly,
$$
|II |\le C M(B^kf)(0).
$$
Finally, from the decay of $a_k$ we obtain
$$
| III| \le C \int_{|z|>3} \frac{|B^kf(z)|}{|z|^{3}}\, dy\le C M(B^kf)(0),
$$
by using a standard argument which consists in estimating the integral
on the annuli $\{3^j \le |z| < 3^{j+1}  \}$. Therefore we get \eqref{desigualtat0}
and part (a) of the Theorem is proved. Now we turn to the proof of the claim.
We express $a_k$ as
\begin{equation*}
\begin{split}
 a_k & = (B^k)^{-1}(b_k\, \chi_{\C\setminus Q_0})=   (B^k)^{-1}( b_k - b_k\chi_{Q_0})\\
  & =  \delta_0 - (B^k)^{-1}( b_k\chi_{Q_0}),
\end{split}
\end{equation*}
where $\delta_0$ is the Dirac delta at the origin. We have to determine the decay of $(B^k)^{-1}( b_k\chi_{Q_0})(z)$
when $|z|>3$:
\begin{equation}\label{E2}
\begin{split}
\!\!\!\!(B^k)^{-1}( b_k\chi_{Q_0})(z)\!=& \lim_{\ep\to 0}\int_{\ep < |w|}\! \overline{b_k(z-w)} ( b_k\chi_{Q_0})(w)\, dw\\
=& \lim_{\ep\to 0}\int_{\ep < |w|}\! (\overline{b_k(z-w)} - \overline{b_k(z)})( b_k\chi_{Q_0})(w)\, dw \\
& \qquad  \qquad + \overline{b_k(z)}\lim_{\ep\to 0}\int_{\ep < |w|}\!  ( b_k\chi_{Q_0})(w)\, dw\\
=& \lim_{\ep\to 0}\int_{\ep < |w|}\! (\overline{b_k(z-w)} \!-\! \overline{b_k(z)})( b_k\chi_{Q_0})(w)\, dw \!+\! 
\overline{b_k(z)} B^k(\chi_{Q_0})(0)\\
:=&\, I_k +II_k .
\end{split}
\end{equation}
On the one hand, since 
$$
|\overline{b_k(z-w)} - \overline{b_k(z)}|\le \frac{C|w|}{|z|^{3}}, \quad |z|>3, \quad |w|\le \sqrt{2},
$$
we obtain
$$
|I_k|\le C \int_{w\in Q_0}\frac{|w|}{|z|^3}\, \frac{1}{|w|^2}\, dw = \frac{C}{|z|^3}.
$$
On the other hand, we will check in the next Lemma that when $k$ is odd $B^k(\chi_{Q_0})(0)\!=\!0$. Then $II_k=0$ and so we get \eqref{decay}.

\begin{lemanom}
 Let Q be any square, with sides parallel to axis, centered at $0$. Then
 \begin{enumerate}[(a)]
  \item $\overline{(B^k \chi_Q)(z)}= (B^k \chi_Q)(\bar z)$.
  \item  $(B^k \chi_Q)(i z)= (-1)^k (B^k \chi_Q)(z)$.
  \item $(B^k \chi_Q)(0)=0$ if $k$ is odd, and  $(B^k \chi_Q)(0)\neq 0$ if $k$ is even.
 \end{enumerate}
\end{lemanom}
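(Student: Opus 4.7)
My plan is to reduce everything to two simple symmetries. The kernel
$b_k(\zeta) = \frac{(-1)^k k}{\pi}\frac{\bar\zeta^{k-1}}{\zeta^{k+1}}$
satisfies $\overline{b_k(\zeta)} = b_k(\bar\zeta)$ and $b_k(\lambda\zeta) = \lambda^{-2k}\, b_k(\zeta)$ for $|\lambda|=1$ (in particular $b_k(i\zeta) = (-1)^k b_k(\zeta)$, since $i^{-2k} = (-1)^k$), while the square $Q$ is invariant under both $w \mapsto \bar w$ and $w \mapsto iw$, each of which preserves Lebesgue measure.

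For (a), I write $B^k\chi_Q(\bar z) = \int_Q b_k(\bar z - w)\,dw$ and change variables $w = \bar u$; the substitution preserves both $Q$ and the area element, yielding $\int_Q \overline{b_k(z-u)}\,du = \overline{B^k\chi_Q(z)}$. Part (b) is the same argument with the substitution $w = iu$ and the kernel identity $b_k(i(z-u)) = (-1)^k b_k(z-u)$. The odd case of (c) is then immediate: setting $z=0$ in (b) gives $B^k\chi_Q(0) = -B^k\chi_Q(0)$, forcing the value to vanish.

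The even case of (c) is where the real content lies, since for even $k$ property (b) only states $B^k\chi_Q(0) = B^k\chi_Q(0)$. By the degree-$(-2)$ homogeneity of $b_k$, $B^k\chi_Q(0)$ is independent of the side length of $Q$, so I may take $Q = Q_0 = [-1,1]^2$. In polar coordinates $w = re^{i\theta}$, one has $b_k(w) = \frac{(-1)^k k}{\pi r^2}e^{-2ik\theta}$, and with $r(\theta) = 1/\max(|\cos\theta|,|\sin\theta|)$ describing $\partial Q_0$, the principal-value integral collapses to
\[
B^k\chi_Q(0) = \frac{(-1)^k k}{\pi}\int_0^{2\pi}e^{-2ik\theta}\ln r(\theta)\,d\theta,
\]
because the radial $\ln\varepsilon$ singularity is killed by $\int_0^{2\pi}e^{-2ik\theta}\,d\theta = 0$.

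For $k=2m$, I plan to exploit the $\pi/2$-periodicity and $\theta \mapsto \pi/2-\theta$ symmetry of $\ln r(\theta)$ to fold the integral down to $[0,\pi/4]$, integrate by parts (boundary terms vanish because $\sin(m\pi) = 0$), and reduce to $\int_0^{\pi/4}\sin(4m\theta)\tan\theta\,d\theta$. After the substitution $t = 2\theta$, standard Chebyshev-type identities expand $\sin(2mt)(1-\cos t)/\sin t$ as a finite linear combination of cosines $\cos((2j+1)t)$ and $\cos(2jt)$; integrating term by term over $[0,\pi/2]$ picks out exactly $L_m := 1 - \tfrac{1}{3} + \tfrac{1}{5} - \cdots + \tfrac{(-1)^{m-1}}{2m-1}$, the $m$th partial sum of Leibniz's series for $\pi/4$. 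The resulting formula $B^{2m}\chi_Q(0) = 1 - 4L_m/\pi$ is nonzero because the alternating-series estimate guarantees $L_m \neq \pi/4$ for every finite $m$. This last step is the main obstacle: the symmetries of $Q$ alone do not force non-vanishing, so an honest computation is unavoidable, and recognizing Leibniz partial sums in the answer is the trick that yields non-vanishing uniformly in $k$.
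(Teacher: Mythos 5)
Your proof is correct, and for the hard part of (c) — the non-vanishing of $B^{2m}\chi_Q(0)$ — it takes a genuinely different and, I would say, cleaner route than the paper's. Parts (a), (b), and the odd case of (c) are identical to the paper's symmetry argument, so the comparison concerns only the even case. The paper works in Cartesian-flavoured variables: after passing to the first-quadrant piece in polar form, it expands $\sin(4j\theta(r))$ in $\sin\theta(r),\cos\theta(r)$, substitutes $\sin\theta(r)=\sqrt{r^2-1}/r$, lands on a sum of integrals $\int_0^1 x^{2m+2}(x^2+1)^{-(2j+1)}\,dx$, reduces each by repeated integration by parts to $\arctan$ plus a rational function, and finally needs the combinatorial identity \eqref{suma} (proved separately in the Appendix by an iteration trick) to see that the $\arctan$-coefficients collapse to a single $-1$. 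Your route instead stays in polar coordinates throughout: the p.v.\ integral over $Q_0$ becomes $\frac{k}{\pi}\int_0^{2\pi}e^{-2ik\theta}\ln r(\theta)\,d\theta$ because the $\ln\varepsilon$ singularity integrates to zero against $e^{-2ik\theta}$; the fourfold symmetry and the reflection $\theta\mapsto\pi/2-\theta$ fold this to $\int_0^{\pi/4}\cos(4m\theta)\ln\cos\theta\,d\theta$; one integration by parts (with vanishing boundary terms, since $\sin(m\pi)=0$) and the substitution $t=2\theta$ reduce it to $\int_0^{\pi/2}\sin(2mt)\frac{1-\cos t}{\sin t}\,dt$; and the standard Dirichlet-kernel expansions $\frac{\sin(2mt)}{\sin t}=2\sum_{j=1}^m\cos((2j-1)t)$ and $\frac{\sin(2mt)\cos t}{\sin t}=1+\cos(2mt)+2\sum_{j=1}^{m-1}\cos(2jt)$ give the closed form $B^{2m}\chi_{Q_0}(0)=1-\frac{4}{\pi}L_m$ with $L_m=\sum_{j=1}^m\frac{(-1)^{j-1}}{2j-1}$. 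I checked this against the paper's computation at $m=1$ (both give $1-4/\pi$), and it gives a transparent reason for non-vanishing. One small remark: invoking the strict oscillation of the alternating partial sums around $\pi/4$ is a valid but slightly heavier argument than needed; the cleanest justification (and the one the paper implicitly uses for $R_j(1)-R_j(0)$) is simply that $L_m$ is rational while $\pi/4$ is not, so $1-4L_m/\pi\neq 0$. What your approach buys is an explicit closed form for $B^{2m}\chi_{Q_0}(0)$ and the elimination of the appendix identity \eqref{suma} entirely; what the paper's approach buys is nothing extra here — your version is a genuine simplification of this step.
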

\begin{proof}
 It is easy to check (a) and (b). Just consider the symmetry of the domain with respect to conjugation
 and with respect to rotation of angle $\pi/2$:
 \begin{equation*}
\begin{split}
 \overline{(B^k \chi_Q)(z)} &=  \text{p.v.}\int_{Q} \overline{b_k(z-w)}\, dw = \text{p.v.}\int_{Q} b_k(\bar z-\bar w)\, dw\\
                            &= \text{p.v.}\int_{Q} b_k(\bar z- w)\, dw = (B^k\chi_Q)(\bar z)
\end{split}
\end{equation*}
and
\begin{equation*}
\begin{split}
 (B^k \chi_Q)(i z) &= \text{p.v.}\int_{Q} b_k(i z- w)\, dw =  \text{p.v.}\int_{Q} b_k(i z- i w)\, dw \\
                            &= (-i)^{2k}\text{p.v.}\int_{Q} b_k( z- w)\, dw  = (-1)^k (B^k\chi_Q)(z).
\end{split}
\end{equation*}
From (b), when $k$ is odd, we have $(B^k \chi_Q)(0)=-(B^k \chi_Q)(0)$ and so  $(B^k \chi_Q)(0)=0$.
To get  $(B^k \chi_Q)(0)\neq 0$ when $k$ is even we have to work a little more. Set $k=2j$ and by simplicity let $Q$ be the square 
$Q_0$. Then
 \begin{equation*}
\begin{split}
(B^{2j} \chi_{Q_0})(0) &= \text{p.v.}\int_{Q_0} b_{2j}(w) \, dw = \int_{Q_0\setminus D(0,1)} b_{2j}(w) \, dw\\
    & = 4 \int_{E} b_{2j}(w) \, dw = 4\, \frac{2j}{\pi} \int_{E} \frac{\bar w^{2j-1}}{w^{2j+1}} \, dw,
\end{split}
\end{equation*}
where $E=\{w\in Q_0\setminus D(0,1): \; \operatorname{Re }w>0 \text{ and } \operatorname{Im} w >0 \}$. The second equality follows from 
the cancellation of the kernels and the third one from $b_{2j}(w)=b_{2j}(iw)$.
Therefore, making the change to polar coordinates $w=r e^{i\theta}$, with $1\le r\le \sqrt{2}$ and $\theta(r) \le \theta
\le \frac{\pi}{2} - \theta(r)$, $\theta(r) = \arccos(\frac{1}{r})$, we have
\begin{equation*}
\begin{split}
(B^{2j} \chi_{Q_0})(0) &= 4\frac{2j}{\pi} \int_{1}^{\sqrt{2}} \int_{\theta(r)}^{\frac{\pi}{2} - \theta(r)}
\frac{ (re^{-i\theta})^{2j-1}}{(re^{i\theta})^{2j+1}} \, d\theta \,r \, dr\\
    & = 4\frac{2j}{\pi} \int_{1}^{\sqrt{2}} \int_{\theta(r)}^{\frac{\pi}{2} - \theta(r)}
e^{-i\theta 4j} \, d\theta \, \frac{dr}{r}\\
     & = \frac{-4}{\pi} \int_{1}^{\sqrt{2}} \sin(\theta(r) 4j) \, \frac{dr}{r}.
\end{split}
\end{equation*}
Expressing $\sin(\theta(r) 4j)$ in terms of $\sin(\theta(r))$  and $\cos(\theta(r))$ and replacing 
$\sin(\theta(r)) = \frac{\sqrt{r^2 -1}}{r}$ and $\cos(\theta(r))=1/r$ we write
\begin{equation}\label{nozero}
\begin{split}
(B^{2j} \chi_{Q_0})(0) &= \frac{-4}{\pi}\sum_{m=0}^{2j-1} (-1)^m \binom{4j}{2m+1} 
\int_{1}^{\sqrt{2}} \frac{1}{r^{4j}} (r^2-1)^{\frac{2m+1}{2}} \, \frac{dr}{r}\\*[5pt]
&= \frac{-4}{\pi}\sum_{m=0}^{2j-1} (-1)^m \binom{4j}{2m+1} 
\int_{0}^{1}\frac{x^{2m+2}}{(x^2+1)^{2j+1}}\, dx \\*[5pt]
&:=  \frac{-4}{\pi} \int_{0}^{1} F_j(x) \, dx.
\end{split}
\end{equation}
Integrating by parts as many times as necessary, one easily obtains, for some $a,b\in \mathbb Q$,
$$
\int_{0}^{1}\frac{x^{2m+2}}{(x^2+1)^{2j+1}}\, dx = a + b \frac{\pi}{4} \neq 0.
$$
Although each factor in \eqref{nozero} is non-zero, there still might be a cancellation in the sum on $m$.
We will check that $(B^{2j} \chi_{Q_0})(0)\neq 0$ proving that a primitive of $F_j(x)$ is some rational function $R_j(x)$ with integer coefficients
 minus $\arctan(x)$. 
Thus we will have
$$
\int_0^1 F_j(x)\, dx= R_j(1)-R_j(0)-\frac{\pi}{4}\neq 0,
$$
because $R_j(1)-R_j(0)$ will be a rational number.

Given $n<d$, we define 
$$I(d,2n):=\int \frac{x^{2n}}{(x^2+1)^d}\, dx.
$$
It is not necessary to fix the constant of the primitive because we will value a
 definite integral. Integrating by parts we have
$$
I(d,2n)= \frac{-x^{2n-1}}{2(d-1) (x^2+1)^{d-1}} + \frac{2n-1}{2(d-1)}I(d-1,2(n-1)).
$$
Iterating the procedure we obtain
\begin{equation}\label{Ide1}
 I(d,2n)=  \frac{(2n-1)!}{2^{n-1}(n-1)!} \frac{(d-n-1)!}{2^n (d-1)!} I(d-n,0) + R(x),
\end{equation}
where $R(x)$ is some rational function defined on $\mathbb R$ and $R(0)=0$. 
Given $k\ge 2$ one easily gets
\begin{equation}\label{Ide2}
 I(k,0)=  \frac{(2k-3)!}{2^{2k-3}(k-1)!(k-2)!}  I(1,0) + R(x).
\end{equation}
Now, from \eqref{nozero} together with \eqref{Ide1} and \eqref{Ide2},
\begin{equation*}
\begin{split}
\int F_j(x)\, dx &= I(1,0)\left\{ \left(\sum_{m=0}^{2j-2}(-1)^m \binom{4j}{2m+1} \frac{(2m+1)!(4j-2m-3)!}{m!(2j)!(2j-m-2)!2^{4j-2}} \right) \right.\\
& \qquad\qquad\quad \left.\vphantom{\left(\sum_{m=0}^{2j-2}(-1)^m \binom{4j}{2m+1} \frac{(2m+1)!(4j-2m-3)!}{m!(2j)!(2j-m-2)!2^{4j-2}} \right)} - \frac{(4j)!}{(2j-1)!(2j)!2^{4j-1}}\right\} + R_j(x).
\end{split}
\end{equation*}
Performing some computations (see the Appendix), we get exactly what we wanted, 
\begin{equation}\label{suma}
\left(\sum_{m=0}^{2j-2}(-1)^m \binom{4j}{2m+1} \frac{(2m+1)!(4j-2m-3)!}{m!(2j)!(2j-m-2)!2^{4j-2}} \right)
 - \frac{(4j)!}{(2j-1)!(2j)!2^{4j-1}} = -1, 
\end{equation}
that is,
$$
\int F_j(x)\, dx = - I(1,0) + R_j(x)= -\arctan(x) +R_j(x).
$$
\end{proof}

Let us prove assertion (b) of the Theorem. Recall that now $k$ is even. By \eqref{E1} one has
$$
|(B^k)^2_Q f(0) | \lesssim M^j (B^kf)(0)
$$
if and only if 
\begin{equation}\label{E3}
 |III | \lesssim  M^j (B^kf)(0).
\end{equation}
By \eqref{E2}, when $|z|>3$,
$$
a_k(z)= \overline{b_k(z)}(B^k\chi_{Q_0})(0) + O\left(\frac{1}{|z|^3} \right):= \alpha_k\frac{z^{k-1}}{\bar z^{k+1}}+ O\left(\frac{1}{|z|^3} \right),
$$
where $\alpha_k$ is a non-zero constant that depends on $k$. Consequently, \eqref{E3} holds if and only if
$$
\left|\int_{|z|>3}\frac{z^{k-1}}{\bar z^{k+1}} B^kf(z)\, dz \right| \lesssim M^j(B^kf)(0), \quad f\in L^2.
$$
Since that $B^k$ is invertible in $L^2$, this is equivalent to
\begin{equation}\label{E4}
\left|\int_{|z|>3}\frac{z^{k-1}}{\bar z^{k+1}} G (z)\, dz \right| \lesssim M^j(G)(0), \quad f\in G^2. 
\end{equation}
But \eqref{E4} is false. Indeed, let $G$ be a function with compact support and $0 \le G \le 1$. Obviously, $M^j(G)\le 1$.
On the other hand, since $\frac{z^{k-1}}{\bar z^{k+1}}$ does not belong to $L^1(\C)$, we can make the left-hand side of 
\eqref{E4} as big as we want.

\section{Counterexample}

This section is devoted to prove that condition (a) in the Theorem is sharp for $k=1$. More precisely, we will prove that
 there exists a function $f$ such that for each constant $C>0$ there exists a point $z\in \mathbb C$ satisfying
\begin{equation}\label{E5}
 B^*_S f(z) > C M(Bf)(z).
 \end{equation}

We choose $f:= B^{-1}(\chi_{Q_0})$. For $|z|>2$ one has $M(Bf)(z)=M(\chi_{Q_0}))(z)\approx \frac{1}{|z|^2}$. 
So, in order to get inequality \eqref{E5} 
it is sufficient to prove that, for some $z$, one has
\begin{equation}\label{ma1}
 B^*_S f(z) \ge C \frac{\log|z|}{|z|^2}. 
\end{equation}

For $m>2$ (for instance, $m=5$), take $\alpha \gg m$, set $z=\alpha+ i\alpha$ and consider the truncated operator 
$$
(B) _Q^{2(\alpha+m)}f(z)=\frac{-1}{\pi}\int_{w\notin Q(z,2(\alpha+m))} \frac{f(w)}{(z-w)^2}\, dw.
$$
By definition $B^*_Q f(z)\ge |(B) _Q^{2(\alpha+m)}f(z)|$ and then we will have \eqref{ma1} if we prove
\begin{equation}\label{ma2}
 | (B) _Q^{2(\alpha+m)}f(z) | \gtrsim \frac{\log|z|}{|z|^2}. 
\end{equation}
The idea is to decompose $(B) _Q^{2(\alpha+m)}f(z)$ as a sum of certain terms. 
All terms, except one, can be bounded by $C|z|^{-2}$ and  the exceptional term will be of order $|z|^{-2}\log|z|$.
We begin by writing the equality
$$
(B) _Q^{2(\alpha+m)}f(z)=B^{\sqrt{2}(\alpha+m)}f(z)-\frac{1}{\pi}\int_{E}\frac{f(w)}{(w-z)^2}\, dw,
$$
where $E$ is the set $B(z,\sqrt{2}(\alpha+m)) \setminus Q(z,2(\alpha+m))$. From the pointwise inequality~\eqref{B1} the fisrt term
is bounded by $\frac{C}{|z|^2}$ and we just care about the second one. Set
$E= A_1\cup A_2\cup B$ where (see Figure 1)
\begin{equation*}
 \begin{split}
 A_{1}&:= \{w\in E: \,  \operatorname{Re}(w-z)<0,\,  \operatorname{Im}(w-z)<0 \text{ and}\, \operatorname{Im}(w+m+im)>0\},\\
A_2& := \{w\in E: \,  \operatorname{Re}(w-z)<0,\,  \operatorname{Im}(w-z)<0\text{ and}\, \operatorname{Im}(w+m+im)<0\},\\
B&:= E\setminus (A_1\cup A_2).
 \end{split}
\end{equation*}

\begin{center}
\includegraphics[width=8cm]{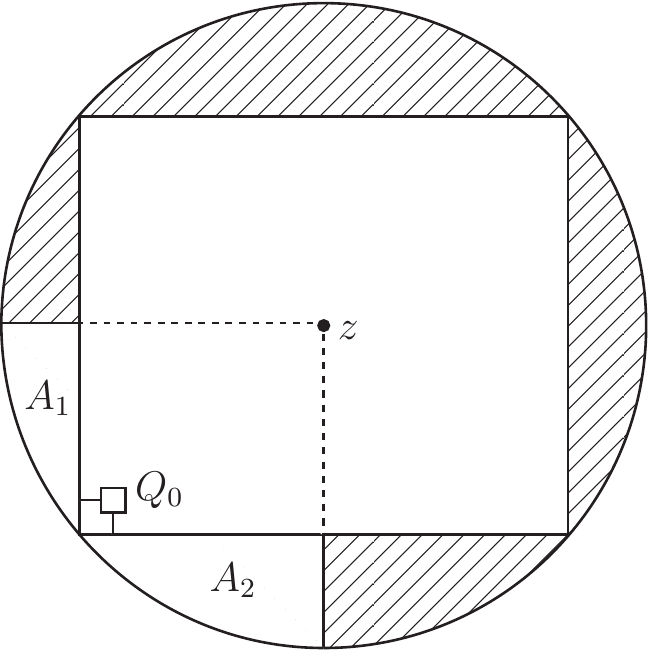}\\[2mm]
Figure 1
\end{center}

Thus
$$
\int_{E}\frac{f(w)}{(w-z)^2}\, dw= \int_{A_1\bigcup A_2}\frac{f(w)}{(w-z)^2}\, dw
+\int_{B}\frac{f(w)}{(\xi-z)^2}\, dw. 
$$
On the set $B$ we have $|f(w)|= |B^{-1}(\chi_{Q_0})(w)| \le \frac{C}{|w|^2}$ and then
$$
\left|\int_B \frac{f(w)}{(w-z)^2}\, dw \right|\le \int_B\frac{C}{|w -z|^2|w|^2} \, dw\le \frac{C|B|}{|z|^4}= \frac{C}{|z|^2} .
$$
 So, to get \eqref{ma2} it remains to prove that 
\begin{equation}\label{E6}
\left|\int_{A_1\cup A_2}\frac{f(w)}{(w-z)^2}\, dw\right| \ge C \frac{\log|z|}{|z|^2}.
\end{equation}
For any $w\in A_1 \cup A_2$ we write 
$$
f(w)= \frac{-1}{\pi}\int_{Q_0}\frac{1}{(\overline{w -\xi})^2}\, d\xi=
\frac{-1}{\pi}\frac{|Q_o|}{\overline{w}^2}-\frac{1}{\pi}\int_{Q_0} \left(\frac{1}{(\overline{w-\xi})^2}-\frac{1}{\overline w^2}
\right)\, d\xi.
$$
By the mean value property, the last integral in the above equality is bounded by~$\frac{C}{|w|^3}$. 
Therefore, an easy computation gives 
$$
\left|\int_{A_1\cup A_2} \left(f(w)+\frac{1}{\pi}\frac{|Q_0|}{\bar w^2}\right)\frac{1}{(w-z)^2}\, dw \right|
\le C \int_{A_1 \cup A_2}\frac{1}{|w|^3}\frac{1}{|w-z|^2}\, dw \le \frac{C}{|z|^2}. 
$$
Consequently, inequality \eqref{E6} holds if and only if 
\begin{equation}\label{E7}
 \left|\int_{A_1\cup A_2} \frac{1}{(z-w)^2}\frac{1}{\overline w^2}\, dw\right| \ge C \frac{\log|z|}{|z|^2} .
\end{equation}
Notice that, by the symmetry of the sets, $w\in A_1$ if and only if $i\overline{w}\in A_ 2$. 
Thus the integral in \eqref{E7} can be written as 
$$
\int_{A_1}\left( \frac{1}{(z-w)^2}\frac{1}{\overline{w}^2}+\frac{1}{(z-i\overline w)^2}\frac{1}{(iw)^2}\right) \, dw.
$$
Set $A_1^-=\{w\in A_1: \, \operatorname{Im}(w)<0\}$,  $A_1^+=\{w\in A_1: \bar w\in A_1^{-} \}$ and let $D=A_1\setminus (A_1^+\cup A_1^-)$ 
(see Figure 2).

\begin{center}
\includegraphics[width=8cm]{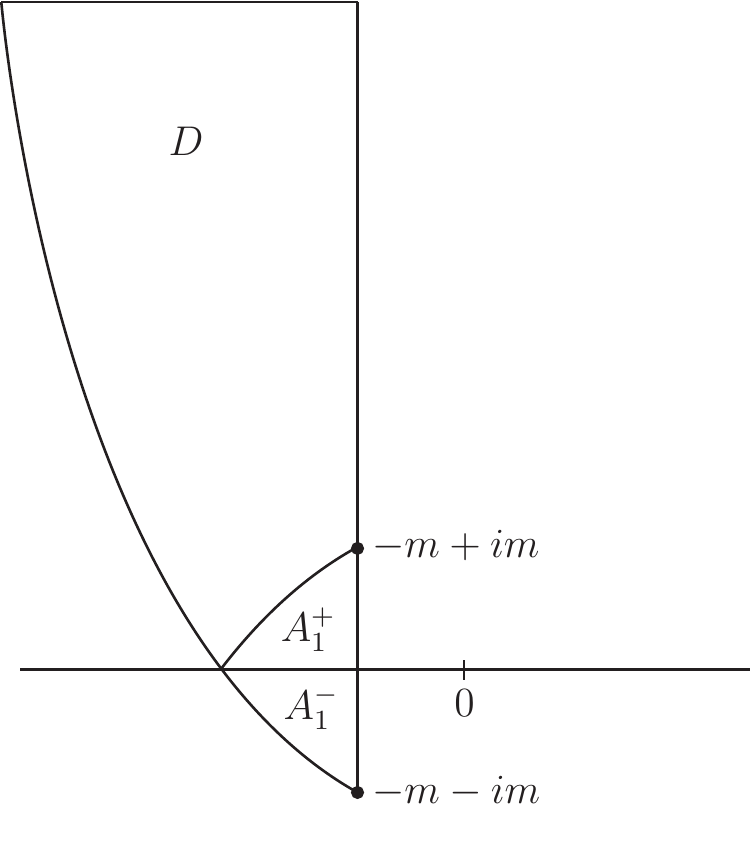}

Figure 2
\end{center}

First of all we prove that the integral over $A_1^+\cup A_1^-$ is bounded by $\frac{C}{|z|^2}$. Using a change of variable in the integral 
over $A_1^-$, one can see that the integral over $A_1^+ \cup A_1^-$ is
$$
%\begin{split}
\int_{A_1^+}\!\left(\frac{1}{(z-\overline w)^2}-\frac{1}{(z-i\overline w)^2}\right) \frac{1}{w^2}\, dw +
\int_{A_1^+}\!\left(\frac{1}{(z-w)^2}-\frac{1}{(z-iw)^2}\right) \frac{1}{\overline{w}^2}\, dw
:= I+II. 
%\end{split}
$$
We are going to estimate $|I|$ and $|II|$ by $\frac{C}{|z|^2}$. Indeed,
\begin{equation*}
|I|=\left|\int_{A_1^+}\frac{-2\overline{w}^2+\overline{w}(2z-2iz)}{w^2(z-\overline{w})^2(z-i\overline{w})^2}\, dw\right|
\le \frac{C}{|z|^4}\int_{A_1^+}\left(1+\frac{|z|}{|w|}\right)\,dw \le \frac{C m}{|z|^3}\le \frac{C}{|z|^2}, 
\end{equation*}
where in the first inequality we have used that $|z-\overline{w}|=|z-i\overline{w}|\approx |z|$, and 
in the second one that $\int_{A_1^+}|w|^{-1}\le Cm$.
Using similar computations for the second term  we get 
$$
|II|= \left|\int_{A_1^+}\frac{-2w^2+ w(2z-2iz)}{\bar w^2(z-w)^2(z-i w)^2}\, dw\right|\le \frac{C}{|z|^2}.
$$
So, only remains to compute the integral over $D$, which we split again into two terms,
\begin{equation*}
 \begin{split}
\int_{D}\left(\frac{1}{(z-w)^2}\frac{1}{\overline{w}^2}-\frac{1}{(z-i\overline{w})^2}\frac{1}{w^2}\right)\,dw&\,=
\int_{D}\frac{1}{(z-w)^2}\left(\frac{1}{\overline{w}^2}-\frac{1}{w^2}\right)\,dw \\[7pt]
+ \int_{D}\frac{1}{w^2}\left(\frac{1}{(z-w)^2}-\frac{1}{(z-i\overline{w})^2}\right)\, dw &:=III+IV.
\end{split}
\end{equation*}
We note that if $w\in D$ then $|z-\overline{w}|=|z-i\overline{w}|\approx |z|$ and $m\le |w|\le 3 |z|$.
We treat the term $|IV|$ as before. In fact, we have
\begin{equation*}
 \begin{split}
|IV|&= \int_{D}\frac{2z(w-i\overline{w}) - \overline{w}^2-w^2}{w^2 (z-w)^2(z-i\overline{w})^2} \, dw
\le \int_{D} \frac{2|w|^2+4|z||w|}{|w|^2|z-w|^2|z-i\overline{w}|^2}\, dw\\[7pt]
&\le \frac{C}{|z|^4}\int_{D}\left(1+\frac{|z|}{|w|}\right)\, dw
\le \frac{C|D|}{|z|^4} + \frac{C}{|z|^3}\int_{m}^{3|z|} 1 \,dr
\le \frac{C}{|z|^2}.
\end{split}
\end{equation*}

\begin{center}
\includegraphics[width=8cm]{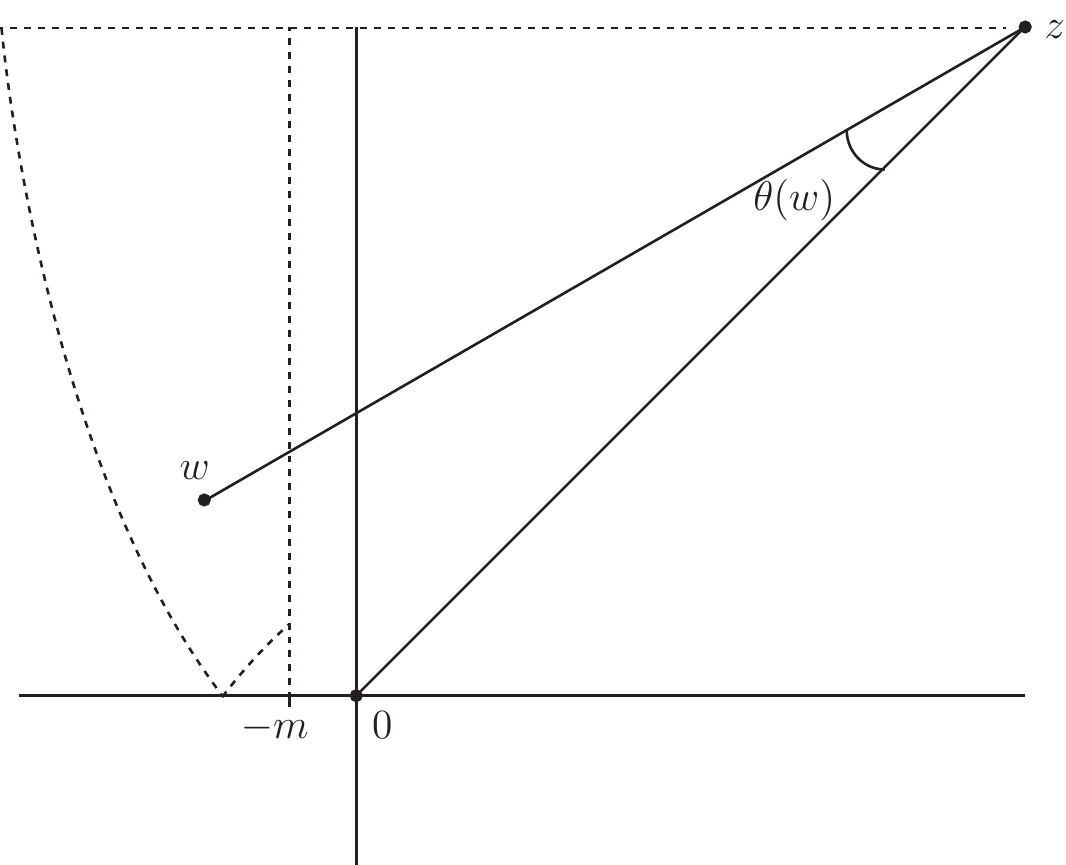}\\[2mm]
Figure 3
\end{center}

Finally the term $III$ will give us that our counterexample works.
For $w\in D$ (see Figure 3) we write $w-z=R(w)e^{i(\frac{5\pi}{4}-\theta(w))}$  where $R(w)=|w-z|\approx |z|$ and $0\le\theta(w)\le \frac{\pi}{4}$.
So, $i(w-z)^{-2}= -e^{i2\theta(w)} (R(w))^{-2}$. 
Then we have 
\begin{equation*}
 \begin{split}
  III&= 2i\int_{D}\frac{1}{(z-w)^2}\frac{\operatorname{Im} w^2}{|w|^4}\, dw 
  = -2\int_{D}\frac{e^{i2\theta(w)}}{R^2(w)} \frac{\operatorname{Im} w^2}{|w|^4}\, dw\\[2mm]
&=-2\int_{D}\frac{(\cos2\theta(w)+i\sin2\theta(w))}{R^2(w)}\frac{\operatorname{Im} w^2}{|w|^4}\, dw.
\end{split}
\end{equation*}

Since $\cos2\theta(w)\ge 0$ and $\operatorname{Im} w^2 \le 0$,
\begin{equation*}
 \begin{split}
  |\operatorname{Re} III|= 2\int_{D}\frac{\cos2\theta(w)}{R^2(w)}\frac{|\operatorname{Im} w^2|}{|w|^4}\, dw \ge
\frac{C}{|z|^2}\int_{D}\cos2\theta(w)\frac{|\operatorname{Im} w^2|}{|w|^4}\, dw.
 \end{split}
\end{equation*}
Fix $\delta >0$ and set
$$
\widetilde{D} :=\{w\in D\, : \, \cos2\theta(w)>\delta\}\cap\left\{w\in\C: \frac{\pi}{2}+\frac{\pi}{100}\le \operatorname{arg} w
\le \pi - \frac{\pi}{100}\right\}.
$$
Then, if $w\in \widetilde{D}$ we have $|\operatorname{Im} w|\ge |w| \frac{\beta}{\sqrt{1+\beta^2}}\ge  \frac{\beta|w|}{2}$,
where $\beta =\tan\frac{\pi}{50}$.
Note that the Lebesgue measure of $\widetilde{D}$ is comparable to the Lebesgue measure of $D$. Finally, using polar coordinates we get 
$$
|\operatorname{Re} III|\ge C\frac{\delta \beta}{|z|^2}\int_{\widetilde{D}}\frac{1}{|w|^2}\,dw\ge
C\frac{\delta \beta}{|z|^2}\int_{4m}^{|z|/10}\frac{dr}{r}\ge
C \frac{\delta \beta}{|z|^2}\log{|z|},
$$
which yields \eqref{E7} and the counterexample. 
 
\section{Appendix}
 In this section  we will prove, for the sake of the reader's convenience, identity \eqref{suma}. 
 To get it we will write the term in the left hand side of \eqref{suma} in another way. That is, 
\begin{equation*}
\begin{split}
&\left(\sum_{m=0}^{2j-2}(-1)^m \binom{4j}{2m+1} \frac{(2m+1)!(4j-2m-3)!}{m!(2j)!(2j-m-2)!2^{4j-2}} \right)
 - \frac{(4j)!}{(2j-1)!(2j)!2^{4j-1}}\\*[5pt]
&\hspace{3.5cm}=\frac{(4j)!}{(2j)!(2j-1)!2^{4j-1}}\sum_{m=0}^{2j-1}(-1)^m\binom{2j-1}{m}\frac{1}{4j-2m-1}\\*[5pt]
&\hspace{3.5cm}:= \frac{(4j)!}{(2j)!(2j-1)!2^{4j-1}} \; S,
\end{split}
\end{equation*}
where the last identity defines $S$.
Thus, only remains to prove that 
\begin{equation}\label{E9}
 S=-\frac{(4j)!}{(2j)!(2j-1)!2^{4j-1}}.
\end{equation}
Using twice the trivial fact that
$$
\sum_{m=0}^{2j-1}(-1)^m\binom{2j-1}{m}= (1-1)^{2j-1}=0,
$$
we have 
\begin{equation*}
 \begin{split}
 S&=\sum_{m=0}^{2j-1}(-1)^m\binom{2j-1}{m}\left(\frac{1}{ 4j-2m-1}-1\right)\\
&=(-1)2(2j-1)\sum_{m=0}^{2j-2}(-1)^m\binom{2j-1}{m}\frac{1}{4j-2m-1}\\
&=(-1)2(2j-1)\sum_{m=0}^{2j-2}(-1)^m\binom{2j-1}{m}\left(\frac{1}{4j-2m-1}-\frac{1}{3}\right)\\
&=(-1)^22^2(2j-1)(2j-2)\sum_{m=0}^{2j-3}(-1)^m\binom{2j-1}{m}\frac{1}{4j-2m-1}.
 \end{split}
\end{equation*}
Iterating this process $(2j-1)$ times we obtain \eqref{E9}, and so \eqref{suma} is proved.

\begin{gracies}
The authors were partially supported by grants numbers 2009SGR420
(AGAUR) and  MTM2010-15657 (MINECO).
\end{gracies}

\vspace{0.5 truecm}

\vspace{0.5 truecm}

\noindent
\begin{tabular}{@{}l}
Departament de Matem\`{a}tiques\\
Universitat Aut\`{o}noma de Barcelona\\
08193 Bellaterra, Barcelona, Catalonia\\\\

{\it E-mail:} {\tt annaboschcamos@gmail.com}\\ 
{\it E-mail:} {\tt mateu@mat.uab.cat}\\ 
{\it E-mail:} {\tt orobitg@mat.uab.cat}\\
\end{tabular}

\end{document}